\newtheorem{theorem}{Theorem}[section]
\newtheorem{lemma}[theorem]{Lemma}
\newtheorem{remark}[theorem]{Remark}
\numberwithin{equation}{section}
\title{\sffamily On  Whitney type inequalities for local anisotropic polynomial approximation}
\author{Dinh Dung$^a$\footnote{Corresponding author. Email: dinhzung@gmail.com}, Tino Ullrich$^b$ \\
$^a$ Vietnam National University, Hanoi, Information Technology Institute \\
144, Xuan Thuy, Hanoi, Vietnam  \\
$^b$Hausdorff-Center for Mathematics, 53115 Bonn, Germany}
\date{\ttfamily September 25, 2010 -- Version 1.6}
\def\ZZ{{\mathbb Z}}
\def\NN{{\mathbb N}}
\def\RR{{\mathbb R}}
\def\Pp{{\mathcal P}}
\newlength{\fixboxwidth}
\begin{document}
\maketitle

\begin{abstract}
We prove a multivariate Whitney type theorem for
the local anisotropic  polynomial approximation in $L_p(Q)$ with $1\leq p\leq \infty$.
Here $Q$ is a $d$-parallelepiped in $\RR^d$ with sides parallel to the coordinate axes. We consider the 
error of best approximation
of a function $f$ by algebraic polynomials of fixed degree at most $r_i - 1$ in variable $x_i,\ i=1,...,d$, 
and relate it to a 
so-called total mixed modulus of smoothness appropriate to characterizing 
the convergence rate of the approximation error. This theorem is derived from
 a Johnen type theorem on equivalence between a certain K-functional and the total mixed modulus of smoothness
which is proved in the present paper. 

\medskip
\noindent
{\bf Keywords}\ Whitney type inequality; Anisotropic approximation by polynomials; 
Total mixed modulus of smoothness; Mixed K-functional; 
 Sobolev space of mixed smoothness; 

\medskip
\noindent
{\bf Mathematics Subject Classifications (2000)} \ 41A10; 41A50; 41A63   
\end{abstract}

\section{Introduction and main results}
The classical Whitney theorem establishes the equivalence between the modulus of 
smoothness $\omega_r(f,|I|)_{p,I}$ and the error of best approximation $E_r(f)_{p,I}$ of 
a function $f:I \to \mathbb{R}$ by algebraic polynomials of degree at most $r-1$ measured in 
$L_p$, $1\leq p\leq \infty$.
Namely, the following inequalities
\begin{equation} \label{ineq:WhitneyIneq[d=1]}
2^{-r} \omega_r(f,|I|)_{p,I} 
\ \le \ 
E_r(f)_{p,Q}  
\ \le \ 
C \omega_r(f,|I|)_{p,I}
\end{equation} 
hold true with a constant $C$ depending only on $r$. This result was first proved by Whitney 
\cite{Wh57} for $p=\infty$ and extended by Brudny\u{\i} \cite{Br64} to $1 \le p <\infty$. 
The inequalities \eqref{ineq:WhitneyIneq[d=1]} provide, in particular, a convergence 
characterization for a local polynomial approximation when the degree $r-1$ of polynomials is fixed and the 
interval $I$ is small.\\
\noindent
Several authors have dealt with this topic in order to extend and generalize 
the result in various directions.  
Let us briefly mention them. A multivariate (isotropic) generalization for functions on a coordinate 
$d$-cube $Q$ in $\RR^d$ was given by Brudny\u{\i} \cite{Br70} and \cite{Br70_2}. 
It turned out that the result keeps valid if one replaces the $d$-cube by a more general domain $\Omega$.
The case of a convex domain $\Omega \subset \RR^d$ is already treated in Brudny\u{\i} \cite{Br70}. Let us also 
refer to the recent contributions by Dekel, Leviatan \cite{De04} and Dekel \cite{De10} with focus on convex and Lipschitz domains and
the improvement of the constants involved. \\
A reasonable question is also to ask for the case $0<p<1$. 
We refer to the works of Storozhenko \cite{S}, Storozhenko, Oswald \cite{StOs78}, 
and in addition to the appendix of the substantial paper by Hedberg and Netrusov \cite{HeNe07} 
for a brief history and further references.\\
A natural question arises: Is there a Whitney type theorem for the anisotropic 
approximation of multivariate functions on a coordinate $d$-parallelepiped $Q$? Some work has been done in this 
direction, see for instance Garrig\'{o}s, Tabacco \cite{GaTo02}. However, the present paper deals with a 
rather different setting which is 
somehow related to the theory 
of function spaces with mixed smoothness properties \cite{Di86,ST87,Ull06,Vy06}. We intend to approximate 
a multivariate function $f$ by polynomials of fixed degree at most $r_i - 1$ in variable 
$x_i,\ i=1,...,d$, on a small $d$-parallelepiped $Q$.  
A total mixed modulus of smoothness is defined which turns out to be a suitable convergence characterization 
to this approximation.
The classical Whitney inequality can be derived as a corollary of Johnen's 
theorem \cite{J} on the equivalence of the $r$-th Peetre K-functional $K_r(f,t^r)_{p,I}$, see \cite{P}, and the 
modulus of smoothness $\omega_r(f,t)_{p,I}$. A proof was given by Johnen and Scherer in
\cite{JS76}. Following this approach to Whitney type theorems we will introduce the notion 
of a mixed K-functional and prove
its equivalence to the total mixed modulus of smoothness by generalizing the technique of Johnen and Scherer
to the multivariate mixed situation.
\subsection{Notation}
In order to give an exact setting of the problem and 
formulate the main results let us preliminarily introduce some necessary notations.
As usual, $\mathbb{N}$ is reserved for the natural numbers, by
$\mathbb{Z}$ we denote the set of all integers, and by $\RR$ the real numbers. 
Furthermore, $\ZZ_+$ and $\RR_+$ denote the set of non-negative integers and real numbers, respectively. 
Elements $x$ of $\RR^d$ will be denoted by $x = (x_1,...,x_d)$. 
For a vector $r\in \ZZ^d_+$ and $x \in \RR^d$, we will further write 
$$
    x^r := (x_1^{r_1},..., x_d^{r_d})\,.
$$
Moreover, if $x,y \in {\RR}^d$, the inequality $x \le y \ (x < y)$ means that $x_i \le y_i\ (x_i < y_i), \ i=1,...,d$. 
As usual, the notation $A \ll B$ indicates that there is a constant $c>0$ (independent of the parameters
which are relevant in the context) such that $A \le cB$, whereas
$A \asymp B$ is used if $A \ll B$ and $B \ll A$, respectively. 

If $r \in {\ZZ}^d_+$, let $\Pp_r$ be the set of algebraic polynomials of degree at most $r_i - 1$ 
at variable $x_i, \ i \in [d]$, where $[d]$ denotes the set of all natural numbers from $1$ to $d$. 
We intend to approximate a function
$f$ defined on a $d$-parallelepiped 
$$
      Q := [a_1,b_1]\times ... \times [a_d,b_d]
$$
by polynomials from the class $\mathcal{P}_r$. If $D \subset {\RR}^d$ is a domain in $\RR^d$ we denote by  
$L_p(D)$, $0<p\leq \infty$, the quasi-normed space 
of Lebesgue measurable functions on $D$ with the usual $p$-th integral quasi-norm 
$\|\cdot\|_{p,D}$ to be finite, whereas we use the ess sup norm if $p = \infty$. 
The error of the best approximation of $f \in L_p(Q)$ by polynomials from $\Pp_r$
is measured by
\begin{equation*}  
E_r(f)_{p,Q} := \inf_{\varphi \in \Pp_r} \ \|f -\varphi\|_{p,Q}. 
\end{equation*}
For $r \in {\ZZ}_+$, $h\in \RR$, and a univariate functions $f$, the 
$r$th difference operator $\Delta_h^r$ is defined by 
\begin{equation*}
\Delta_h^r(f,x) := \
\sum_{j =0}^r (-1)^{r - j} \binom{l}{j} f(x + jh)\quad,\quad \Delta_h^0f(x) := f(x)\,,
\end{equation*}
whereas for $r \in {\ZZ}^d_+$, $h\in \RR^d$ and a $d$-variate function $f:\RR^d \to \RR$,  
the mixed $r$th difference operator $\Delta_h^r$ is defined by 
\begin{equation*}
\Delta_h^r := \
\prod_{i = 1}^d \Delta_{h_i,i}^{r_i}\,.
\end{equation*}
Here, the univariate operator
$\Delta_{h_i,i}^{r_i}$ is applied to the univariate function $f$ by considering $f$ as a 
function of  variable $x_i$ with the other variables fixed. 
Let
\begin{equation*}
\omega_r(f,t)_{p, Q}:= \sup_{|h_i| \le t_i, i \in [d]} 
\|\Delta_h^r(f)\|_{p,Q_{rh}}\quad,\quad  t \in {\RR}^d_+,
\end{equation*} 
be the mixed $r$th modulus of smoothness of $f$,
where for $y,h \in \RR^d$, we write $yh:= (y_1h_1,...,y_dh_d)$ and
 $Q_y := \{x \in Q: x_i, x_i + y_i \in [a_i,b_i], \  i \in [d]\}$.
For $r \in {\ZZ}^d_+$ and $e \subset [d]$, 
denote by $r(e) \in {\ZZ}^d_+$ the vector with $r(e)_i = r_i, i \in e$ and 
$r(e)_i = 0, i \notin e$ \ ($r(\emptyset) = 0$).
We define the {\em total mixed modulus of smoothness of order $r$} by
\begin{equation*} 
\Omega_r(f,t)_{p,Q}
:= \ 
 \sum_{e \subset [d], e \ne \emptyset} \omega_{r(e)}(f,t)_{p,Q}, \ t \in {\RR}^d_+.
\end{equation*}
The total mixed modulus of smoothness is a modification of mixed moduli of smoothness 
which are an important tool for characterizing function spaces with mixed smoothness properties, 
see \cite{Di86,ST87} and the recent contributions \cite{Ull06,Vy06}.

\subsection{Main results}
In the present paper, we generalize the Whitney inequality \eqref{ineq:WhitneyIneq[d=1]}  
to the  error of the best local anisotropic approximation $E_r(f)_{p,Q}$ by polynomials from $\Pp_r$ and 
the total mixed modulus of smoothness $\Omega_r(f,t)_{p,Q}$. More precisely, we prove the 
following Whitney type inequalities.
 
\begin{theorem} \label{Theorem: Whitney}
Let $1 \le p \le \infty$, $r \in {\NN}^d$. Then there is a constant $C$ depending 
only on $r,d$ such that for every $f \in L_p(Q)$
\begin{equation} \label{ineq:WhitneyIneq} 
\Big(\sum_{e \subset [d]}\prod_{i \in e} 2^{r_i}\Big)^{-1} \Omega_r(f,\delta)_{p,Q}
\ \le \ 
E_r(f)_{p,Q}  
\ \le \ 
C \Omega_r(f,\delta)_{p,Q}, 
\end{equation}
where $\delta = \delta(Q):= (b_1-a_1,...,b_d-a_d)$ is the size of $Q$. 
\end{theorem}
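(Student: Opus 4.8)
The plan is to prove the two inequalities in \eqref{ineq:WhitneyIneq} separately: the left (lower) bound by an elementary direct argument, and the right (upper) bound through the Johnen type equivalence between the mixed K-functional and $\Omega_r$ announced above.

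For the lower bound I would start from the observation that $\Delta_h^{r(e)}\varphi = 0$ for every nonempty $e\subset[d]$ and every $\varphi\in\Pp_r$: since $\Delta_h^{r(e)} = \prod_{i\in e}\Delta_{h_i,i}^{r_i}$ and the $r_i$th difference in $x_i$ annihilates any polynomial of degree at most $r_i-1$ in $x_i$, the product vanishes. Hence $\Delta_h^{r(e)}f = \Delta_h^{r(e)}(f-\varphi)$, and since each $\Delta_{h_i,i}^{r_i}$ has $L_p$ operator norm at most $\sum_{j=0}^{r_i}\binom{r_i}{j} = 2^{r_i}$ (triangle inequality and translation invariance of $\|\cdot\|_p$), one gets $\|\Delta_h^{r(e)}(f-\varphi)\|_{p,Q_{r(e)h}}\le \prod_{i\in e}2^{r_i}\,\|f-\varphi\|_{p,Q}$. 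Taking the supremum over $|h_i|\le\delta_i$ yields $\omega_{r(e)}(f,\delta)_{p,Q}\le \prod_{i\in e}2^{r_i}\,\|f-\varphi\|_{p,Q}$; summing over nonempty $e$ and taking the infimum over $\varphi\in\Pp_r$ gives $\Omega_r(f,\delta)_{p,Q}\le\big(\sum_{e\subset[d]}\prod_{i\in e}2^{r_i}\big)E_r(f)_{p,Q}$, using $\sum_{e\subset[d]}\prod_{i\in e}2^{r_i} = \prod_{i=1}^d(1+2^{r_i})$.

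For the upper bound I would invoke the paper's Johnen type theorem, $K_r(f,t)_{p,Q}\asymp\Omega_r(f,t)_{p,Q}$, where $K_r(f,t)_{p,Q} = \inf_g\big(\|f-g\|_{p,Q} + \sum_{e\ne\emptyset}(\prod_{i\in e}t_i^{r_i})\,\|D^{r(e)}g\|_{p,Q}\big)$ is the mixed K-functional, $D^{r(e)} = \prod_{i\in e}\partial_{x_i}^{r_i}$, and the infimum runs over the Sobolev space of mixed smoothness. By the direction $K_r\ll\Omega_r$ it then suffices to prove the Poincar\'e type inequality $E_r(f)_{p,Q}\le C\,K_r(f,\delta)_{p,Q}$; that is, for every admissible $g$ I must exhibit a $P\in\Pp_r$ with $\|g-P\|_{p,Q}\le C\sum_{e\ne\emptyset}(\prod_{i\in e}\delta_i^{r_i})\,\|D^{r(e)}g\|_{p,Q}$, whence $E_r(f)_{p,Q}\le\|f-g\|_{p,Q}+\|g-P\|_{p,Q}$ and passing to the infimum over $g$ finishes the argument. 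To build $P$ I would tensorize a univariate near-best projection: in the variable $x_i$ there is a bounded linear projection $P_i$ onto polynomials of degree $<r_i$ (an averaged Taylor polynomial, so that no point values enter and the bound is genuinely in $L_p$) with $\|g-P_ig\|_p\le C\,\delta_i^{r_i}\|\partial_{x_i}^{r_i}g\|_p$. Writing $S_i:=\mathrm{Id}-P_i$ and using that operators in distinct variables commute, I expand $\mathrm{Id}-\prod_{i=1}^dP_i = \sum_{e\ne\emptyset}(-1)^{|e|+1}\prod_{i\in e}S_i$, so that $g-P = \sum_{e\ne\emptyset}(-1)^{|e|+1}\prod_{i\in e}S_i\,g$ with $P:=\prod_{i=1}^dP_ig\in\Pp_r$; applying the univariate estimate successively in each $i\in e$ and commuting $\partial_{x_i}^{r_i}$ past $S_j$ for $j\ne i$ gives $\|\prod_{i\in e}S_i\,g\|_{p,Q}\le C\prod_{i\in e}\delta_i^{r_i}\,\|D^{r(e)}g\|_{p,Q}$, and summing over $e$ yields the required inequality.

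The main obstacle is twofold. The genuinely technical ingredient is the Johnen type equivalence $K_r\asymp\Omega_r$ (in particular the direction $K_r\ll\Omega_r$, obtained through mixed Steklov averages), which is established separately in the paper and which I simply quote here. Within the present reduction the delicate point is the iteration of the univariate Poincar\'e estimate over the variables of $e$: each step interchanges an $L_p$ norm with an integral remainder (Minkowski's integral inequality, whence the restriction $p\ge1$), and one must keep the univariate projections and the differentiations commuting so that after all $|e|$ applications exactly the mixed derivative $D^{r(e)}g$ remains, carrying the correct factor $\prod_{i\in e}\delta_i^{r_i}$.
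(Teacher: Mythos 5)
Your proposal is correct, and its overall skeleton coincides with the paper's: the lower bound is obtained by exactly the same annihilation argument ($\Delta_h^{r(e)}\varphi=0$ for $\varphi\in\Pp_r$ plus the operator-norm bound $\prod_{i\in e}2^{r_i}$), and the upper bound by the same reduction $E_r(f)_{p,Q}\ll K_r(f,\delta^r)_{p,Q}\ll\Omega_r(f,\delta)_{p,Q}$, quoting the Johnen type equivalence (Theorem \ref{Theorem: K_r><omega_r}) for the second step. The one place where you genuinely deviate is the intermediate Poincar\'e inequality $\|g-P\|_{p,Q}\ll\sum_{e\ne\emptyset}\bigl(\prod_{i\in e}\delta_i^{r_i}\bigr)\|g^{(r(e))}\|_{p,Q}$, which the paper states as Theorem \ref{Theorem:UpperBnd[E_{r-1}(f)]} and proves by taking $P=T_r(g)$, the Taylor polynomial at a corner of $Q$; the boundary terms $f^{((k_1,r_2))}(0,\cdot)$ that then appear are controlled through the trace-type derivative estimates of Lemma \ref{Lemma: DerivativeIneq}, which requires passing to continuous representatives and sup-norm bounds. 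Your construction via tensorized averaged Taylor projections $P=\prod_iP_ig$ and the inclusion--exclusion identity $\mathrm{Id}-\prod_iP_i=\sum_{e\ne\emptyset}(-1)^{|e|+1}\prod_{i\in e}S_i$ avoids point evaluations and intermediate-order derivatives altogether: each factor $S_i$ contributes exactly $\delta_i^{r_i}\partial_{x_i}^{r_i}$ through the averaged remainder formula and Minkowski's integral inequality, and the commutation of operators acting in distinct variables makes the iteration clean. This is arguably tidier than the paper's route (no Lemma \ref{Lemma: DerivativeIneq}, no trace issues), at the modest cost of setting up the projections and justifying the commutation of $\partial_{x_i}^{r_i}$ with $S_j$ for $j\ne i$ on the mixed Sobolev space; both arguments place the real technical burden on the direction $K_r\ll\Omega_r$ of Theorem \ref{Theorem: K_r><omega_r}, which you correctly identify and quote rather than reprove.
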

Theorem \ref{Theorem: Whitney} shows that the total mixed modulus of smoothness $\Omega_r(f,t)_{p,Q}$
gives a sharp convergence characterization of the best anisotropic polynomial approximation
when $r$ is fixed and the size $\delta(Q)$ of the $d$-parallelepiped $Q$ is small. 
This may have applications in the approximation
of functions with mixed smoothness by piecewise polynomials or splines.

So far we focus on the case $1\leq p\leq \infty$. 
 This makes it possible to apply a technique 
developed by Johnen and Scherer \cite{JS76}. As mentioned above they showed the equivalence of Peetre's 
$K$-functional  of order $r$ with respect to a classical Sobolev space $W^r_p$ and 
the modulus of smoothness of order $r$ for the univariate case.
The question of a $K$-functional suitable for mixed Sobolev spaces has been often considered in the past. 
We refer for instance to Sparr \cite{Sp74} and DeVore et al. \cite{DePeTe94}.
By introducing a mixed $K$-functional $K_r(f,t)_{p,Q}, \ t \in {\RR}^d_+,$ 
(see the definition in Section \ref{Johnen's inequality}), such an equivalence between 
$K_r(f,t^r)_{p,Q}$ and the total mixed modulus of smoothness $\Omega_r(f,t)_{p,Q}$ can be established as well. 
Namely, we prove the following

\begin{theorem}  \label{Theorem: K_r><omega_r}
Let $1\le p \le \infty$ and $r \in {\NN}^d$. 
Then for any $f \in L_p(Q)$, the following inequalities
\begin{equation} \label{ineq: K_r><omega_r}
\Big(\sum_{e \subset [d]}\prod_{i \in e} 2^{r_i}\Big)^{-1}\Omega_r(f,t)_{p,Q} \ \le K_r(f,t^r)_{p,Q}
 \ \le \ 
C\Omega_r(f,t)_{p,Q}\quad,\quad t \in {\RR}^d_+, 
\end{equation}
hold true with a constants $C$  depending on $r,p,d$ only.
\end{theorem}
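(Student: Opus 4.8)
The plan is to prove the two inequalities separately, the left one being elementary and the right one requiring the construction of a suitable mixed smoothing operator. Recall that the mixed $K$-functional (defined in Section \ref{Johnen's inequality}) has the form
\[
K_r(f,s)_{p,Q} \ = \ \inf_g \Big\{ \|f-g\|_{p,Q} + \sum_{e \subset [d],\, e \ne \emptyset} \Big(\prod_{i\in e}s_i\Big)\,\|D^{r(e)}g\|_{p,Q}\Big\},
\]
the infimum being taken over all $g$ whose mixed weak derivative $D^{r(e)}g$ (of order $r_i$ in $x_i$ for $i\in e$, no derivative for $i\notin e$) belongs to $L_p(Q)$; evaluated at $s=t^r$ the weight $\prod_{i\in e}s_i$ becomes $t^{r(e)}:=\prod_{i\in e}t_i^{r_i}$.

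For the left inequality I would fix an admissible $g$ and estimate each term of $\Omega_r(f,t)_{p,Q}$ by the triangle inequality $\omega_{r(e)}(f,t)_{p,Q} \le \omega_{r(e)}(f-g,t)_{p,Q} + \omega_{r(e)}(g,t)_{p,Q}$. The first summand is controlled by $\|\Delta_h^{r(e)}(f-g)\|_p \le \prod_{i\in e}2^{r_i}\,\|f-g\|_{p,Q}$ on the relevant subdomain, since the mixed difference operator $\Delta_h^{r(e)}$ has coefficients of total absolute value $\prod_{i\in e}2^{r_i}$; the second summand is bounded, via the integral representation of a mixed difference as an iterated integral of $D^{r(e)}g$, by $t^{r(e)}\|D^{r(e)}g\|_{p,Q}$ with constant $1$. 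Summing over $e\ne\emptyset$, using $\sum_{e\ne\emptyset}\prod_{i\in e}2^{r_i} \le \sum_{e\subset[d]}\prod_{i\in e}2^{r_i}$, and passing to the infimum over $g$ yields the claimed factor.

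The right inequality is the substantial part. Here I would build a mixed Steklov-type smoothing operator $\mathcal{S}_t := \prod_{i=1}^d S_{t_i,i}$, where $S_{t_i,i}$ is the univariate averaging operator of order $r_i$ acting on the variable $x_i$ that underlies the Johnen--Scherer proof; these commute since they act on disjoint variables. Taking $g:=\mathcal{S}_t f$, the key algebraic device is the telescoping product expansion $I - \mathcal{S}_t = \sum_{e\subset[d],\, e\ne\emptyset}(-1)^{|e|-1}\prod_{i\in e}(I-S_{t_i,i})$, obtained from $\mathcal{S}_t=\prod_i\bigl(I-(I-S_{t_i,i})\bigr)$. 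Applied to $f$, each term is governed in $L_p$ by the univariate estimate $\|(I-S_{t_i,i})u\|\ll \omega_{r_i}(u,t_i)$ in the variables $i\in e$, so that the product produces exactly the mixed modulus and $\|f-\mathcal{S}_t f\|_{p,Q}\ll\sum_{e\ne\emptyset}\omega_{r(e)}(f,t)_{p,Q}=\Omega_r(f,t)_{p,Q}$. For the derivative bounds I would use that $D_i^{r_i}S_{t_i,i}$ is, up to the factor $t_i^{-r_i}$, a bounded operator expressible through $\Delta_{t_i,i}^{r_i}$, while $S_{t_j,j}$ for $j\notin e$ remains a uniformly bounded averaging operator; this gives $\|D^{r(e)}\mathcal{S}_t f\|_{p,Q}\ll t^{-r(e)}\omega_{r(e)}(f,t)_{p,Q}$ for each $e\ne\emptyset$. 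Inserting $g=\mathcal{S}_t f$ into the definition of $K_r(f,t^r)_{p,Q}$, the weights $t^{r(e)}$ cancel the factors $t^{-r(e)}$ and both families of estimates combine to give $K_r(f,t^r)_{p,Q}\ll\Omega_r(f,t)_{p,Q}$.

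I expect the main obstacle to lie in the rigorous construction of $\mathcal{S}_t$ on the bounded parallelepiped $Q$: the averages and differences defining $S_{t_i,i}$ involve shifts that may leave $Q$, so near the faces of $Q$ one must use one-sided (or reflected) Steklov means, exactly as in the univariate Johnen--Scherer argument, and then verify that the telescoping expansion still reproduces precisely the mixed moduli $\omega_{r(e)}$ evaluated on the correct subdomains of the form $Q_{rh}$. Carrying this boundary bookkeeping through the tensor product, and checking that the mixed derivatives $D^{r(e)}\mathcal{S}_t f$ genuinely satisfy the claimed bounds with constants depending only on $r,p,d$, is the technical heart of the proof; once the univariate building blocks are available with their two defining properties, the multivariate statement follows from the product structure outlined above.
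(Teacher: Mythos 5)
Your outline follows the paper's route: the left inequality is proved exactly as you describe, and for the right inequality the paper also takes a tensor product of univariate Johnen--Scherer smoothing operators (there denoted $P^{r_i}_{t_i,i}$, built from a B-spline--weighted average of $r_i$-th differences, which is precisely your Steklov-type $S_{t_i,i}$) and expands the product by inclusion--exclusion so that the error and the mixed derivatives are controlled by the various $\omega_{r(e)}(f,t)_{p,Q}$. Nevertheless your sketch has two genuine gaps, and they are exactly where the paper spends most of its effort. The first is the boundary problem, which you correctly identify but do not solve: the univariate operator is only defined on a proper subinterval (for $t_i \le \bar t_i := (b_i-a_i)/(4r_i^2)$), so the construction only yields $K_r(f,t^r)_{p,Q_e} \ll \Omega_r(f,t)_{p,Q}$ on each of the $2^d$ overlapping sub-parallelepipeds $Q_e$ obtained by running one-sided operators from each corner. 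Passing from these local estimates to $K_r(f,t^r)_{p,Q}$ requires the subadditivity $K_r(f,t^r)_{p,Q} \ll \sum_{e \subset [d]} K_r(f,t^r)_{p,Q_e}$, which is Lemma \ref{Lemma: Ineq[K_r]} of the paper; its proof needs a smooth partition of unity, Leibniz-rule estimates for the glued competitor $g$, and the Landau--Kolmogorov type inequality of Lemma \ref{Lemma: DerivativeIneq} to absorb the intermediate derivatives that the Leibniz rule creates. This is a missing idea rather than bookkeeping: reflection or one-sided means alone do not produce an admissible $g \in W^r_p(Q)$ on all of $Q$.

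The second gap is the range of $t$. The theorem is asserted for all $t \in \RR^d_+$, but no Steklov-type operator can be defined once some $t_i$ exceeds a fixed fraction of $b_i - a_i$; your construction simply does not exist there. The paper handles $t > \bar t$ separately: it takes a near-optimal $\bar g$ for the parameter $\bar t$, replaces it by its Taylor polynomial $T_r(\bar g) \in \Pp_r$, which is admissible at zero derivative cost since $(T_r(\bar g))^{(r(e))} = 0$ for every non-empty $e$, bounds $\|\bar g - T_r(\bar g)\|_{p,Q}$ via Theorem \ref{Theorem:UpperBnd[E_{r-1}(f)]}, and concludes by the monotonicity of $\Omega_r(f,\cdot)_{p,Q}$. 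Your proof would need this (or an equivalent) additional step to cover large $t$.
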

The paper is organized as follows. 
In Section \ref{Taylor} we establish an error formula for the anisotropic 
approximation by Taylor polynomials for functions from mixed Sobolev spaces. 
Section \ref{Johnen's inequality} is devoted 
to the equivalence of the total mixed modulus of smoothness and the mixed $K$-functional 
(Theorem \ref{Theorem: K_r><omega_r}) which 
is applied in Section \ref{Whitney's inequality} to derive the Whitney type inequality for 
the local anisotropic polynomial approximation (Theorem \ref{Theorem: Whitney}).

\section{Anisotropic approximation by Taylor polynomials} 
\label{Taylor}

By $f^{(k)}, \ k \in {\ZZ}^d_+$, we denote the $k$th order mixed weak derivative of $f$, i.e.,
$f^{(k)}:= \partial^{k_1 + \cdots + k_d} f/\partial x_1^{k_1} \cdots \partial x_d^{k_d}$
\ ($f^{(0)}= f$). 
For $r \in {\NN}^d$ and $1\leq p\leq \infty$, the Sobolev type space 
$W^r_p(Q)$ of mixed smoothness $r$ is defined as the set of functions 
$f \in L_p(Q)$, for which the following norm is finite
\begin{equation*}
\|f\|_{W^r_p(Q)}
:= \ 
 \sum_{e \subset [d]} \|f^{(r(e))}\|_{p,Q}.
\end{equation*}
If $f\in W^r_p(Q), k \le r$, $k,r \in {\ZZ}^d_+$, and $x^0 \in Q$, 
then we can define the Taylor polynomial $T_k(f)$ of order $k$ by
 \begin{equation*}  
T_k(f,x)  
\ := \ 
T_k(f,x^0,x)  
\ = \ 
\sum_{0 \le s < k} f^{(s)}(x^0)e_s(x-x^0) 
\end{equation*}
where $e_s(x) := \prod_{i=1}^d e_{s_i}(x_i)$ and $e_m(t) := t^m/m!$. 
We want to give an upper bound of the error of approximation of 
$f\in W^r_p(Q)$ by the Taylor polynomial $T_r$. For this purpose we need 
the following auxiliary lemma.

\begin{lemma} \label{Lemma: DerivativeIneq}
Let $1 \le p \le \infty$, $r\ge 1$ and $Q=[a,b]$. Then there exist constants $C_1,C_2$ depending 
only on $r$ such that for $k=0,...,r-1$ and $0 \le t \le b-a$ the inequalities 
\begin{equation} \label{ineq:DerivativeIneq} 
\begin{split}
t^k\|f^{(k)} \|_{p,Q}  
&\ \le \ 
C_1 (\|f\|_{p,Q} \ + \ t^r\|f^{(r)} \|_{p,Q})\,,\\
t^{k+1/p}\|f^{(k)} \|_{\infty,Q}  
&\ \le \ 
C_2 (\|f\|_{p,Q} \ + \ t^r\|f^{(r)} \|_{p,Q})
\end{split}
\end{equation} 
hold true for any $f\in W^r_p(Q)$\,. 
\end{lemma}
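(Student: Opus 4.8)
The plan is to reduce both estimates to fixed inequalities on the unit interval, and then to recover the dependence on $t$ and on $Q$ by an affine rescaling followed by a covering argument. First I would record the two reference bounds on $[0,1]$: for every $g\in W^r_p([0,1])$ and $0\le k\le r-1$,
\[
\|g^{(k)}\|_{p,[0,1]}\le C\big(\|g\|_{p,[0,1]}+\|g^{(r)}\|_{p,[0,1]}\big),\qquad
\|g^{(k)}\|_{\infty,[0,1]}\le C\big(\|g\|_{p,[0,1]}+\|g^{(r)}\|_{p,[0,1]}\big),
\]
with $C$ depending only on $r$. The first is the equivalence of $\|g\|_{p,[0,1]}+\|g^{(r)}\|_{p,[0,1]}$ with the full Sobolev norm $\sum_{j=0}^{r}\|g^{(j)}\|_{p,[0,1]}$; for fixed $p$ it follows from a compactness argument (were it false there would be $g_n$ with $\sum_{j\le r}\|g_n^{(j)}\|_{p}=1$ while $\|g_n\|_{p}+\|g_n^{(r)}\|_{p}\to0$, and the compact embedding $W^r_p([0,1])\hookrightarrow W^{r-1}_p([0,1])$ would force a limit $g$ with $g\equiv g^{(r)}\equiv 0$ yet $\sum_{j\le r}\|g^{(j)}\|_p=1$), and the constants can in fact be taken independent of $p$. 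The second bound then follows by applying the univariate embedding $W^1_p([0,1])\hookrightarrow L_\infty([0,1])$ to $g^{(k)}$ and invoking the first bound for the orders $k$ and $k+1\le r$.

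Next I would localize to an arbitrary subinterval $I=[c,c+t]\subseteq Q$ of length $t>0$ via the substitution $g(y):=f(c+ty)$, $y\in[0,1]$. Since $g^{(j)}(y)=t^{j}f^{(j)}(c+ty)$, a change of variables gives $\|g^{(j)}\|_{p,[0,1]}=t^{j-1/p}\|f^{(j)}\|_{p,I}$ and $\|g^{(k)}\|_{\infty,[0,1]}=t^{k}\|f^{(k)}\|_{\infty,I}$. Inserting these into the two reference bounds and multiplying through by $t^{1/p}$ yields the localized estimates
\[
t^{k}\|f^{(k)}\|_{p,I}\le C\big(\|f\|_{p,I}+t^{r}\|f^{(r)}\|_{p,I}\big),\qquad
t^{k+1/p}\|f^{(k)}\|_{\infty,I}\le C\big(\|f\|_{p,I}+t^{r}\|f^{(r)}\|_{p,I}\big),
\]
now with $C$ independent of $I$ and $t$. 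This is precisely the step that builds in the correct powers of $t$, including the extra $t^{1/p}$ of the $L_\infty$ estimate. Finally I would globalize from $I$ to $Q$: since $0\le t\le b-a$, the interval $Q=[a,b]$ admits a cover by length-$t$ subintervals $I_1,\dots,I_M$ (take $\lfloor(b-a)/t\rfloor$ abutting intervals plus one further interval anchored at the right endpoint $b$) whose overlap multiplicity is at most $2$. For the first inequality with $p<\infty$ I would raise the localized $L_p$ bound to the $p$-th power, sum over the $I_j$, and use $\|f^{(k)}\|_{p,Q}^{p}\le\sum_{j}\|f^{(k)}\|_{p,I_j}^{p}$ on the left and $\sum_{j}\|\cdot\|_{p,I_j}^{p}\le 2\|\cdot\|_{p,Q}^{p}$ on the right; taking $p$-th roots then gives the first line of \eqref{ineq:DerivativeIneq}, the case $p=\infty$ being the supremum over the cover. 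For the second line I would simply take the maximum of the localized $L_\infty$ estimates, using $\|f^{(k)}\|_{\infty,Q}=\max_{j}\|f^{(k)}\|_{\infty,I_j}$.

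The only genuine content lies in the reference inequalities on $[0,1]$; everything else is bookkeeping through scaling and covering. I would flag two points needing care. First, a single global rescaling of $Q$ onto $[0,1]$ delivers the estimate only at the endpoint value $t=b-a$ and does \emph{not} imply it for smaller $t$, because the factors $t^{k}$ and $t^{r}$ scale differently; this is exactly why the covering argument, which is uniform in $t$, cannot be dispensed with. Second, one must check that the final constants depend on $r$ alone: the covering step contributes only absolute factors, coming from the bounded overlap and from $(a+b)^{p}\le 2^{p-1}(a^{p}+b^{p})$ whose $p$-th root stays bounded, so the dependence is inherited solely from the reference constant, which can be chosen independent of $p$.
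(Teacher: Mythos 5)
Your argument is correct in outline, but it takes a genuinely different route from the paper. The paper does not rescale to a reference interval at all: it quotes the first inequality of \eqref{ineq:DerivativeIneq} directly from DeVore--Lorentz (where it is stated for an interval of length $t$ with explicit, $r$-dependent constants) and then derives the second inequality pointwise, first for $r=1$, $k=0$ via a local average $g_t=\tfrac{\chi_{Q_t}}{t}\int_{Q_t}f$ and the mean-value/Taylor identity $(f-g_t)(y)=\int_\xi^y f'$, and then for general $k<r$ by a Taylor expansion of $f^{(k)}$ about a point $\xi$ combined with the already-established $L_p$ bounds. Your scheme instead proves both estimates on $[0,1]$ once and for all (norm equivalence by compactness, then $W^1_p\hookrightarrow L_\infty$), transfers them to length-$t$ subintervals by the affine substitution $g(y)=f(c+ty)$, and globalizes by a bounded-overlap cover; the scaling identities $\|g^{(j)}\|_{p,[0,1]}=t^{j-1/p}\|f^{(j)}\|_{p,I}$ and the covering bookkeeping are all correct, and your warning that a single global rescaling of $Q$ only yields the endpoint case $t=b-a$ is exactly right. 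What your approach buys is modularity: all the analysis is concentrated in one fixed-domain inequality. What it costs is the constant: the lemma asserts $C_1,C_2$ depend on $r$ alone (and Theorem \ref{Theorem:UpperBnd[E_{r-1}(f)]} relies on this), whereas the compactness argument produces a non-explicit constant that a priori depends on $p$; you assert $p$-uniformity but do not prove it. To close that point you would need to replace the compactness step by an explicit argument --- e.g.\ the Taylor-remainder representation $g^{(k)}(x)=\sum_{\ell}g^{(k+\ell)}(\xi)e_\ell(x-\xi)+\int_\xi^x g^{(r)}(s)e_{r-k-1}(x-s)\,ds$ with a well-chosen $\xi$, which is essentially what the paper (via \cite{DL}) does --- at which point the two proofs converge. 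With that repair your proof is complete.
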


\begin{proof} {\em Step 1.} A proof of the first inequality can be found in \cite[page 38]{DL}. 
Let us prove the second one by using
the first one. We start with $r=1$ and $k=0$. We fix $y \in Q$ and consider a subinterval 
$Q_t$ of length $t$ containing $y$. We define the function 
$$
   g_t(x)=\frac{\chi_{Q_t}(x)}{t} \int_{Q_t}f(s)\,ds.
$$
 Since $f$ is continuous the function 
$f-g_t$ has a zero $\xi \in Q_t$. By Taylor's formula we have
$$
    (f-g_t)(y) = \int\limits_{\xi}^y (f'-g_t')(s)\,ds = \int\limits_{\xi}^y f'(s)\,ds\,.
$$
This gives $|f(y)|-|g_t(y)| \leq |f(y)-g_t(y)| \leq t^{1/p'}\|f'\|_p$ and hence
\begin{equation}\label{f1}
    |f(y)| \leq |g_t(y)|+t^{1/p'}\|f'\|_p \leq t^{1/p'-1}\|f\|_p+t^{1/p'}\|f'\|_p = t^{-1/p}(\|f\|_p+t\|f'\|_p)\,.
\end{equation}
Since we chose $y\in Q$ arbitrarily at the beginning we obtain \eqref{f1} for every $y\in Q$. This proves the second
inequality in  \eqref{ineq:DerivativeIneq} for $r=1$ and $k=0$.\\
{\em Step 2.} Let now $r>1$ and $k<r$. Taylor's formula gives
$$
    f^{(k)}(x) = \sum\limits_{\ell=0}^{r-1-k} f^{(k+\ell)}(\xi)\frac{(x-\xi)^{\ell}}{\ell!} 
    + \int\limits_{\xi}^x f^{(r)}(s) e_{r-k-1}(x-s)\,ds\,.
$$
Therefore, we have
\begin{equation*}
          |f^{(k)}(x)| \ll \sum\limits_{\ell=0}^{r-1-k} \|f^{k+\ell}\|_{\infty}t^{\ell} + \|f^{(r)}\|_pt^{r-k-1+1/p'}\,.
\end{equation*}
Applying the formula from Step 1 leads to 
\begin{equation*}
   \begin{split}       
      |f^{(k)}(x)| &\ll \sum\limits_{\ell=0}^{r-1-k} \|f^{k+\ell}\|_{\infty}t^{\ell} + \|f^{(r)}\|_pt^{r-k-1+1/p'}\\
      &\ll t^{-1/p} \sum\limits_{\ell=0}^{r-1-k} (\|f^{k+\ell}\|_{p}t^{\ell} 
      + \|f^{k+\ell+1}\|_{p}t^{\ell+1})+t^{r-k-1/p}\|f^{(r)}\|_p\\
      &= t^{-(k+1/p)} \Big(\|f^{(r)}\|_pt^r+\sum\limits_{\ell=0}^{r-1-k} (\|f^{k+\ell}\|_{p}t^{k+\ell} 
      + \|f^{k+\ell+1}\|_{p}t^{k+\ell+1})\Big)\,.
   \end{split}       
\end{equation*}
We apply the first inequality in \eqref{ineq:DerivativeIneq} and obtain for every $x$
$$
    t^{k+1/p}|f^{(k)}(x)| \ll \|f\|_p + t^r\|f^{(r)}\|_p\,
$$
which concludes the proof.
\end{proof}

\begin{theorem} \label{Theorem:UpperBnd[E_{r-1}(f)]}
Let $1 \le p \le \infty$, $r \in {\NN}^d$. Then there is a constant $C$ depending 
only on $r,d$ such that for every $f \in W^r_p(Q)$
\begin{equation*}  
E_r(f)_{p,Q}  
\ \le \ 
C \sum_{e \subset [d], e \ne \emptyset} \ \prod_{i \in e} \delta_i^{r_i} \|f^{(r(e))}\|_{p,Q}, 
\end{equation*}
where $\delta = \delta(Q)$.
 \end{theorem}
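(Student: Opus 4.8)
The plan is to take the mixed Taylor polynomial $T_r(f)$ as a concrete approximant. Each monomial $e_s(x-x^0)$ with $0\le s<r$ has degree at most $r_i-1$ in the variable $x_i$, so $T_r(f)\in\Pp_r$ and hence $E_r(f)_{p,Q}\le\|f-T_r(f)\|_{p,Q}$; it therefore suffices to bound the right-hand side. The point values $f^{(s)}(x^0)$, $s<r$, entering $T_r(f)$ are meaningful because applying the second inequality of Lemma \ref{Lemma: DerivativeIneq} successively in each coordinate guarantees that the lower-order mixed derivatives of $f\in W^r_p(Q)$ are bounded, indeed continuous; this is where the auxiliary lemma is used.

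First I would exploit the tensor-product structure of the operator. Writing $T^i_{r_i}$ for the univariate Taylor operator of degree $r_i-1$ acting on the $i$th variable with the others frozen, one checks directly from the definitions that $T_r=\prod_{i=1}^d T^i_{r_i}$. Setting $R^i:=\mathrm{id}-T^i_{r_i}$ and noting that these operators commute (they act on distinct variables), the elementary product expansion gives
$$
\mathrm{id}-T_r\ =\ \mathrm{id}-\prod_{i=1}^d\bigl(\mathrm{id}-R^i\bigr)\ =\ \sum_{\emptyset\ne e\subset[d]}(-1)^{|e|+1}\prod_{i\in e}R^i.
$$
Applied to $f$, this represents $f-T_r(f)$ as a signed sum over nonempty $e\subset[d]$ of the terms $\prod_{i\in e}R^i f$. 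Inserting into each factor the univariate integral form of the Taylor remainder, namely $R^i f=\int_{x_i^0}^{x_i}f^{(r(\{i\}))}(\dots,s_i,\dots)\,e_{r_i-1}(x_i-s_i)\,ds_i$, turns $\prod_{i\in e}R^i f$ into an iterated integral of the pure mixed derivative $f^{(r(e))}$ against the product kernel $\prod_{i\in e}e_{r_i-1}(x_i-s_i)$, with the variables $x_i$, $i\in e$, replaced by the integration variables $s_i$ and the remaining variables untouched.

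The final step is to estimate $\bigl\|\prod_{i\in e}R^i f\bigr\|_{p,Q}$. Since $|x_i-s_i|\le\delta_i$ on $Q$, the kernel is bounded by $\prod_{i\in e}\delta_i^{r_i-1}/(r_i-1)!$, which I would pull out. Applying the generalized Minkowski inequality to move the $L_p(Q)$-norm inside the $s$-integral leaves the integral of $\|f^{(r(e))}(\dots,s_i,\dots)\|$ over the face $\prod_{i\in e}[a_i,b_i]$. Because the shifted derivative no longer depends on $x_i$ for $i\in e$, integrating in those $x_i$ yields a factor $\prod_{i\in e}\delta_i^{1/p}$, while Hölder's inequality in the $s$-variables (over a set of measure $\prod_{i\in e}\delta_i$) yields a factor $\prod_{i\in e}\delta_i^{1-1/p}$ and reconstitutes precisely $\|f^{(r(e))}\|_{p,Q}$. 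Collecting exponents gives $\bigl\|\prod_{i\in e}R^i f\bigr\|_{p,Q}\ll\prod_{i\in e}\delta_i^{r_i}\|f^{(r(e))}\|_{p,Q}$, and the triangle inequality summed over nonempty $e$ yields the claim.

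I expect this last estimate to be the main obstacle: extracting the sharp power $\delta_i^{r_i}$, rather than the $\delta_i^{r_i-1}$ coming from the kernel alone, hinges on correctly combining the generalized Minkowski and Hölder inequalities in the way just sketched. For $p=\infty$ this simplifies considerably, since one merely bounds the kernel and integrates the essential supremum over the face.
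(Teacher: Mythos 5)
Your proof is correct, but it follows a genuinely different route from the paper's. The paper expands $f$ via the identity $\mathrm{id}=\prod_{i}(T^i+R^i)$, so its remainder consists of terms like $T^1R^2f$, which contain boundary traces of \emph{intermediate} mixed derivatives, e.g.\ $f^{((k_1,r_2))}(0,\cdot)$ with $k_1<r_1$; controlling these forces the paper to invoke the second (sup-norm) inequality of Lemma \ref{Lemma: DerivativeIneq} a second time, in the estimation itself, to trade $b_1^{k_1+1/p}\|f^{((k_1,r_2))}(0,\cdot)\|_{p}$ for $\|f^{((0,r_2))}\|_{p,Q}+b_1^{r_1}\|f^{(r)}\|_{p,Q}$. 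Your inclusion--exclusion identity $\mathrm{id}-\prod_i(\mathrm{id}-R^i)=\sum_{e\ne\emptyset}(-1)^{|e|+1}\prod_{i\in e}R^i$ produces the same operator but organizes it into terms $\prod_{i\in e}R^if$ that involve only the ``pure'' derivatives $f^{(r(e))}$ integrated over full fibers, with no trace on a face; consequently Lemma \ref{Lemma: DerivativeIneq} is needed only to make sense of the point values $f^{(s)}(x^0)$ defining $T_r(f)$, not in the quantitative estimate. Your bookkeeping of the powers of $\delta_i$ is exactly right: $\delta_i^{r_i-1}$ from the kernel, $\delta_i^{1-1/p}$ from H\"older in the integration variable, and $\delta_i^{1/p}$ from integrating out the now-absent dependence on $x_i$, recombining to $\delta_i^{r_i}\|f^{(r(e))}\|_{p,Q}$. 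The trade-off is essentially aesthetic: the paper's version needs the auxiliary derivative inequality twice but writes down the expansion mechanically by iterating the univariate Taylor formula, whereas yours requires the (easy but not stated in the paper) commutation $T_r=\prod_iT^i_{r_i}$ and the Fubini-type justification that composing the integral remainders yields the iterated integral of $f^{(r(e))}$ --- a point worth one explicit sentence, though it is at the same level of rigor the paper itself assumes.
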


\begin{proof}
For simplicity we prove the theorem for the case $d=2$ and $Q=[0,b_1] \times [0,b_2]$.
If $g$ is a univariate function on the interval $[0,b]$ and $g \in W^k_p([0,b])$,
 then $g$ can be represented as
\begin{equation*}  
g(x) 
\ = \ 
T_k(g,0,x) \ + \ \int_0^x g^{(k)}(\xi)e_{k-1}(x - \xi) d\xi. 
\end{equation*}
Hence, for a function $f \in W^r_p(Q)$ we get
\begin{equation*}  
\begin{aligned}
f(x) 
\ & = \ 
T_r(f,x) \\
 \ & + \ 
\sum_{k_1=0}^{r_1-1} e_{k_1}(x_1) \int_0^{x_2} f^{((k_1,r_2))}(0,\xi_2)e_{r_2-1}(x_2 - \xi_2) d\xi_2 \\ 
\ & + \ 
\sum_{k_2=0}^{r_2-1} e_{k_2}(x_2) \int_0^{x_1} f^{((r_1,k_2))}(\xi_1,0)e_{r_1-1}(x_1 - \xi_1) d\xi_1 \\ 
\ & + \ 
 \int_0^{x_2} \int_0^{x_1} f^{(r)}(\xi_1,\xi_2)e_{r_1-1}(x_1 - \xi_1)e_{r_2-1}(x_2 - \xi_2) d\xi_1 d\xi_2,
\end{aligned}
\end{equation*}
where
\begin{equation*} 
T_r(f,x)
\ = \ 
\sum_{k_2=0}^{r_2-1}\sum_{k_1=0}^{r_1-1}f^{((k_1,k_2))}(0,0) e_{k_2}(x_2)e_{k_1}(x_1).
\end{equation*}
By triangle and H{\"o}lder's inequality we obtain
\begin{equation} \label{ineq:ErrorEstimation} 
\begin{aligned}
\|f - T_r(f)\|_{p,Q}  
\ & \ll \ 
\sum_{k_1=0}^{r_1-1} b_1^{k_1 + 1/p} b_2^{r_2}\|f^{((k_1,r_2))}(0,\cdot)\|_{p,[0,b_2]} \\
\ & + \ 
\sum_{k_2=0}^{r_2-1} b_1^{r_1} b_2^{k_2 + 1/p}\|f^{((r_1,k_2))}(\cdot,0)\|_{p,[0,b_1]}
\ + \ 
b_1^{r_1} b_2^{r_2}\|f^{(r)}\|_{p,Q}.
\end{aligned}
\end{equation} 
The following inequalities are a direct consequence of Lemma \ref{Lemma: DerivativeIneq}
\begin{equation}\label{eq1}  
\begin{split}
b_1^{k_1+1/p}\|f^{((k_1,r_2))}(0,\cdot)\|_{p,[0,b_2]}
&\ \ll \ 
\|f^{((0,r_2))}\|_{p,Q} \ + \ b_1^{r_1}\|f^{(r)}\|_{p,Q}\,, \\
b_2^{k_2+1/p}\|f^{((r_1,k_2))}(\cdot,0)\|_{p,[0,b_1]}
&\ \ll \ 
\|f^{((r_1,0))}\|_{p,Q} \ + \ b_2^{r_2}\|f^{(r)}\|_{p,Q}.  
\end{split}
\end{equation}
Indeed, put $I_j:= [0,b_j], \ j = 1,2$. If $f$ is in $W^r_p(Q)$,  
then almost all (with respect to $t \in I_1$) functions $g_t:=f^{(r_1,0)}(t,\cdot)$ belong to $W^{r_2}_p(I_2)$.
Now we can use Lemma \ref{Lemma: DerivativeIneq} to obtain for $k_2<r_2$
\begin{equation*}
b_2^{k_2+1/p}|g_t^{k_2}(0)| 
\leq b_2^{k_2+1/p}\|g_t^{k_2}\|_{\infty} 
\ll \|g_t\|_{p,I_2}+b_2^{r_2}\|g_t^{r_2}\|_{p,I_2}
\end{equation*}
for almost all $t$. The weak derivative $g_t^{k_2}$ coincides with 
$f^{(r_1,k_2)}(t,\cdot)$ in $L_p(I_2)$.
If $k_2 < r_2$, then $g_t^{k_2}$ and $f^{(r_1,k_2)}(t,\cdot)$ 
are continuous functions and the equation $g_t^{k_2}(0) = f^{(r_1,k_2)}(t,0)$ 
makes sense and holds true (we take the continuous representative) for almost all $t$.
Hence, using the estimate from above (the norm $\|\cdot\|_{p,I_1,t}$ 
on the right-hand side is with respect to $t$) we have
\begin{equation*}
\begin{split}
b_2^{k_2+1/p}\| f^{(r_1,k_2)}(\cdot,0)\|_{p,I_1}
\ & = \ b_2^{k_2+1/p}\| g_t(0) \|_{p,I_1,t} \\
\ & \ll \ \|\, \|g_t(\cdot)\|_{p,I_2} \, \|_{p,I_1,t}
  + b_2^{r_2}\| \, \|g_t^{r_2}(\cdot)\|_{p,I_2} \, \|_{p,I_1,t} \\
\ & = \ \|f^{(r_1,0)}\|_{p,Q} + b_2^{r_2}\| f^{(r_1,r_2)}\|_{p,Q}.
\end{split}
\end{equation*}
This proves the second line in \eqref{eq1}, the first one is obtained analogously.

Combining the inequalities \eqref{ineq:ErrorEstimation} and \eqref{eq1} proves the theorem. 
\end{proof}

\section{Johnen type inequalities for mixed K-functionals } 
\label{Johnen's inequality}

For $r \in {\NN}^d$, the {\em mixed K-functional $K_r(f,t)_{p,Q}$ } is defined for functions 
$f \in L_p(Q)$ and $t \in {\RR}^d_+$ by
\begin{equation*} 
K_r(f,t)_{p,Q}
:= \ 
 \inf_{g \in W^r_p(Q)} \{\|f - g\|_{p,Q} + 
\sum_{e \subset [d], e \ne \emptyset} \ \Big(\prod_{i \in e} t_i\Big) \|g^{(r(e))}\|_{p,Q}\}.
\end{equation*}
The following technical lemma needs a further notation. Let us assume $a_i \leq c_i<d_i\leq b_i$ for $i\in [d]$. 
We put
$I^i = [a_i,b_i]$, $I^i_1 = [a_i,d_i]$, and $I^i_0 = [c_i,b_i]$ and further
\begin{equation} \label{def:[Q_e]}
Q_e := \prod_{i=1}^d I^i_{\chi_e(i)}\,,
\end{equation}
where $\chi_e$ denotes the characteristic function of the set $e \subset [d]$\,.

\begin{lemma}  \label{Lemma: Ineq[K_r]}
Let $1\le p \le \infty$ and $r \in {\NN}^d$. 
Then for any $f \in L_p(Q)$  the inequality
\begin{equation*} 
  K_r(f,t^r)_{p,Q} \ \le \ C\sum_{e \subset [d]}  K_r(f,t^r)_{p,Q_e}
\end{equation*}
holds true for all $t \in {\RR}^d_+$ with $t_i \le d_i - c_i, \, i \in [d]$. The constant $C$ only depends on 
$r$ and $d$.
\end{lemma}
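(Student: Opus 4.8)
The plan is to reduce the global $K$-functional over $Q$ to the sum of $K$-functionals over the overlapping sub-parallelepipeds $Q_e$ by building a global near-optimal approximant out of the local near-optimal approximants. For each $e \subset [d]$ let $g_e \in W^r_p(Q_e)$ be chosen so that $\|f-g_e\|_{p,Q_e} + \sum_{\emptyset\ne e'\subset[d]}\big(\prod_{i\in e'}t_i\big)\|g_e^{(r(e'))}\|_{p,Q_e}$ is within a factor $2$ of $K_r(f,t^r)_{p,Q_e}$. The task is to glue these $2^d$ functions into a single $g \in W^r_p(Q)$ whose defect $\|f-g\|_{p,Q}$ and whose weighted mixed-derivative terms are each controlled by $\sum_e K_r(f,t^r)_{p,Q_e}$.

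First I would set up the partition-of-unity machinery. In each coordinate direction $i$ the two intervals $I^i_0=[c_i,b_i]$ and $I^i_1=[a_i,d_i]$ overlap on $[c_i,d_i]$, so I can choose a smooth univariate cutoff $\varphi_i$ that is $1$ on $[a_i,c_i]$, $0$ on $[d_i,b_i]$, and transitions on the overlap, with $\|\varphi_i^{(k)}\|_\infty \ll (d_i-c_i)^{-k}$ for $k\le r_i$; its complement $1-\varphi_i$ handles the other interval. Taking tensor products $\Phi_e(x)=\prod_{i}\psi_{i,e}(x_i)$, where $\psi_{i,e}=\varphi_i$ or $1-\varphi_i$ according to $\chi_e(i)$, yields a partition of unity $\sum_{e\subset[d]}\Phi_e \equiv 1$ on $Q$ subordinate to the cover $\{Q_e\}$. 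The candidate global approximant is then $g:=\sum_{e\subset[d]}\Phi_e\,g_e$, which lies in $W^r_p(Q)$ since each $g_e$ does and $\Phi_e$ is smooth and supported in $Q_e$.

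Next I would estimate the two parts of the defect of $g$. The approximation error is immediate from the partition of unity: $f-g=\sum_e \Phi_e(f-g_e)$, so $\|f-g\|_{p,Q}\ll \sum_e \|f-g_e\|_{p,Q_e}$, which is bounded by $\sum_e K_r(f,t^r)_{p,Q_e}$. For the derivative terms one applies the mixed Leibniz rule to $g^{(r(e'))}=\big(\sum_e\Phi_e g_e\big)^{(r(e'))}$, expanding each mixed derivative into a sum over split multi-indices $s\le r(e')$ of products $\Phi_e^{(s)}\,g_e^{(r(e')-s)}$. The crucial bookkeeping is that every derivative $\varphi_i^{(s_i)}$ with $s_i\ge 1$ contributes a factor $\ll(d_i-c_i)^{-s_i}\le t_i^{-s_i}$ (using the hypothesis $t_i\le d_i-c_i$), so each factor of $t_i$ in the weight $\prod_{i\in e'}t_i$ exactly absorbs a lost derivative, and the remaining lower-order derivatives $g_e^{(r(e'')-\cdots)}$ are controlled by the $K$-functional norm of $g_e$ over $Q_e$ for the appropriate subset $e''\subset e'$.

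The main obstacle is this last Leibniz-expansion bookkeeping: after distributing a mixed derivative $r(e')$ across a product, one obtains terms with derivatives on $g_e$ of order $r(e'')$ for various $e''\subset e'$ together with negative powers of $(d_i-c_i)$ from the cutoff factors, and I must verify that every such term is dominated by $\big(\prod_{i\in e''}t_i\big)\|g_e^{(r(e''))}\|_{p,Q_e}$ times an admissible constant, so that the whole sum collapses into $\sum_e K_r(f,t^r)_{p,Q_e}$. Here I would use $t_i\le d_i-c_i$ to convert $(d_i-c_i)^{-(r_i-k_i)}$ into $t_i^{-(r_i-k_i)}$ and match it against the factor $t_i$ present in the target weight only for indices in $e'\setminus e''$, while for indices in $e''$ the full derivative order $r_i$ is retained and the weight factor $t_i$ stays attached; checking that the powers balance in every direction and that the number of terms depends only on $r$ and $d$ (yielding the constant $C$) is the technical heart of the argument, but it is a finite, entirely routine accounting once the cutoffs with the stated derivative bounds are fixed.
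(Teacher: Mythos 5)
Your overall strategy (glue local near-optimal approximants with smooth cutoffs whose derivatives scale like $(d_i-c_i)^{-k}$, then run a Leibniz expansion) is the same idea the paper uses, except that the paper splits one coordinate at a time, writing $g=g_1+\lambda(x_1)(g_0-g_1)$ and iterating, rather than using a full tensor-product partition of unity in all $d$ directions at once. However, the step you dismiss as ``a finite, entirely routine accounting'' is exactly where the proof is not routine, and your description of it is wrong on a key point: after distributing the mixed derivative $r(e')$ over $\Phi_e\, g_e$ you do \emph{not} only obtain derivatives of $g_e$ of order $r(e'')$ for subsets $e''\subset e'$. You obtain $g_e^{(r(e')-s)}$ for all multi-indices $0\le s\le r(e')$, i.e.\ \emph{intermediate} orders $0<r_i-s_i<r_i$ in each active coordinate (already in one variable with $r=2$, the Leibniz rule produces $\varphi' g'$, and $t\|g'\|_p$ is not a term of the $K$-functional). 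These intermediate derivatives are not controlled by the quantity $\|f-g_e\|_{p,Q_e}+\sum_{e''}\bigl(\prod_{i\in e''}t_i^{r_i}\bigr)\|g_e^{(r(e''))}\|_{p,Q_e}$ without an additional Landau--Kolmogorov type interpolation inequality, namely the first inequality of \eqref{ineq:DerivativeIneq} applied coordinate-wise; this is precisely how the paper converts $t_1^{k}\|g_1^{(k,\tilde r(e))}-g_0^{(k,\tilde r(e))}\|_{p}$ into the $k=0$ and $k=r_1$ endpoint terms. Your proposal never invokes such an inequality, so the ``powers balance'' claim does not close the argument.

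There is a second, related gap. The endpoint term in which \emph{all} $r_i$ derivatives in some active coordinate fall on the cutoff leaves $g_e$ itself (order $0$ in that coordinate), and $\|g_e\|_{p,Q_e}$ is \emph{not} bounded by $K_r(f,t^r)_{p,Q_e}$ (take $f$ a large constant: the $K$-functional vanishes while $\|g_e\|_p$ does not). The paper avoids this by differentiating only the product $\lambda\cdot(g_0-g_1)$, so that the order-zero term is the difference $\|g_0-g_1\|\le\|f-g_0\|+\|f-g_1\|$, which the $K$-functionals do control; the analogous case $\tilde r(e)=0$ produces exactly the $\|f-g_0\|+\|f-g_1\|$ terms in \eqref{eq7}. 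In your all-at-once partition of unity you would need to exploit $\sum_e\Phi_e^{(s)}\equiv 0$ for $s\ne 0$ to recenter every such term as a difference $g_e-g_{e'}$, and then face the problem that the two functions are only jointly defined on $Q_e\cap Q_{e'}$ while the support of $\Phi_e^{(s)}$ need not lie there when $s_i=0$ for some $i$. Both obstacles are the reason the paper proceeds one coordinate at a time and leans on Lemma \ref{Lemma: DerivativeIneq}; as written, your proposal has not proved the lemma.
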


\begin{proof} The proof is based on an iterative argument. The first step is to observe
\begin{equation*}
  \begin{split}
    Q &= Q_1 \cup Q_0\\
     &= \Big(I^1_1 \times \prod\limits_{i\in [d]\setminus \{1\}} I^i\Big) \cup 
    \Big(I^1_0 \times \prod\limits_{i\in [d]\setminus \{1\}} I^i\Big)\,
  \end{split}  
\end{equation*}
and to show that
\begin{equation}\label{eq4}
  K_r(f,t^r)_{p,Q}  \ll K_r(f,t^r)_{p,Q_1} + K_r(f,t^r)_{p,Q_0}\,.
\end{equation}
We start with an increasing function $\varphi\in C^{\infty}(\RR)$ such that 
$$
    \varphi(s) = \left\{\begin{array}{rcl}
                    0&:& s<0\\
                    1&:& s>1                  
                 \end{array}\right..
$$
Putting $h= d_1-c_1$ and 
$$
  \lambda(s) = \varphi\Big(\frac{s-c_1}{h}\Big)\quad,\quad s\in \RR\,,
$$  
we obtain a $C^{\infty}(\RR)$-function $\lambda$ that equals zero on $[a_1,c_1]$, 
equals one on on $[d_1,b_1]$, and is increasing on $[c_1,d_1]$\,. As a direct consequence we get
\begin{equation*} 
\|\varphi^{(k)}\|_{\infty, \RR} 
\ \le \ 
h^{-k}\|\varphi^{(k)}\|_{\infty,\RR}\quad, \quad k \in \NN.
\end{equation*}
Let now $f \in W^r_p(Q)$ and $t \in {\RR}^2_+$ with $t_i \le d_i - c_i, \ i \in [d]$.
For arbitrary $g_1 \in W^r_p(Q_1)$ and $g_0 \in W^r_p(Q_0)$, put 
\begin{equation*}
 \begin{split}
    g(x) &= \lambda(x_1)g_0(x) + (1 - \lambda(x_1))g_1(x)\\
    &= g_1(x) + \lambda(x_1)(g_0(x)-g_1(x))\,.
 \end{split}
\end{equation*}
First of all, the function $g$ is defined on $Q_0 \cap Q_1 \subset Q$. 
We extend $g$ by $g_0$ on $Q_0\setminus Q_1$ and by $g_1$ on $Q_1 \setminus Q_0$ and denote the 
result also by $g$. By the construction of 
$\lambda$ this $g$ belongs to $W^r_p(Q)$ and we have
\begin{equation} \label{Ineq:[|f-g|]}
\begin{split}
    \|f - g\|_{p,Q} &\leq \|\lambda(x_1)f(x)-\lambda(x_1)g_0(x)+(1-\lambda(x_1))f(x)-(1-\lambda(x_1))g_1(x)\|_{p,Q}\\
    & \leq  \|f - g_0\|_{p,Q_0} + \|f - g_1\|_{p,Q_1}.
\end{split}
\end{equation} 
Furthermore, for any non-empty fixed subset $e \subset [d]$ we have
$$
    g^{(r(e))}(x) = g_1^{(r(e))}(x) + \sum\limits_{k=0}^{r_1} 
\binom{r_1}{k} \lambda^{(r_1-k)}(x_1)(g^{(k,\tilde{r}(e))}_1(x)-g^{(k,\tilde{r}(e))}_0(x) )
$$ 
on $Q_0 \cap Q_1$, where $\tilde{r}(e)$ denotes the vector $r(e\setminus\{1\})$ without the leading $0$. Note that
$\tilde{r}(e)$ might be zero, which happens in case $e = \{1\}$. Consequently, 
\begin{equation}\label{eq3}
 \begin{split}
   &\Big(\prod\limits_{i\in e}t_i^{r_i}\Big)\|g^{(r(e))}\|_{p,Q_0\cap Q_1}\\  
   &~~\ll \Big(\prod\limits_{i\in e}t_i^{r_i}\Big) 
   \Big(\|g_1^{(r(e))}\|_{p,Q_0\cap Q_1}
	+\max\limits_{0\leq k\leq r_1} h^{-(r_1-k)}\|g^{(k,\tilde{r}(e))}_1-g^{(k,\tilde{r}(e))}_0\|_{p,Q_1\cap Q_0}\Big)\\
   &~~\ll \Big(\prod\limits_{i\in e\setminus\{1\}}t_i^{r_i}\Big) 
   \Big(t_1^{r_1\chi_e(1)}\|g_1^{(r(e))}\|_{p,Q_0\cap Q_1}+\max\limits_{0\leq k\leq r_1} 
   \Big(\frac{t_1}{h}\Big)^{r_1-k}t_1^k\|g^{(k,\tilde{r}(e))}_1-g^{(k,\tilde{r}(e))}_0\|_{p,Q_1\cap Q_0}\Big)\,.
 \end{split}
\end{equation}
We apply the first relation in Lemma \ref{Lemma: DerivativeIneq} in the first component of 
$g_1^{(k,\tilde{r}(e))} - g_0^{(k,\tilde{r}(e))}$ using a similar argument as after 
(\ref{eq1}) and get
$$
t_1^{k} \|g_1^{(k,\tilde{r}(e))} - g_0^{(k,\tilde{r}(e))}\|_{p,Q_0\cap Q_1} 
\ll  
\|g^{(0,\tilde{r}(e))}_1 - g^{(0,\tilde{r}(e))}_0\|_{p,Q_0\cap Q_1} 
+ t_1^{r_1}\|g_1^{(r(e))} - g_0^{(r(e))}\|_{p,Q_0\cap Q_1}.  
$$
Plugging this into \eqref{eq3} and taking $t_1 \leq h$ into account gives in case $\tilde{r}(e) \neq 0$
\begin{equation}\label{eq2}
  \begin{split}
       \Big(\prod\limits_{i\in e}t_i^{r_i}\Big)\|g^{(r(e))}\|_{p,Q_0\cap Q_1}
       \ll &\Big(\prod\limits_{i\in e}t_i^{r_i}\Big)\|g_0^{(r(e))}\|_{p,Q_0}+
       \Big(\prod\limits_{i\in e\setminus \{1\}}t_i^{r_i}\Big)\|g_0^{(0,\tilde{r}(e))}\|_{p,Q_0}\\
       &+\Big(\prod\limits_{i\in e}t_i^{r_i}\Big)\|g_1^{(r(e))}\|_{p,Q_1}+
       \Big(\prod\limits_{i\in e\setminus \{1\}}t_i^{r_i}\Big)\|g_1^{(0,\tilde{r}(e))}\|_{p,Q_1},
  \end{split}
\end{equation}
and in case $\tilde{r}(e) = 0$, i.e., if $e = \{1\}$, 
\begin{equation}\label{eq7}
  \begin{split}
       \Big(\prod\limits_{i\in e}t_i^{r_i}\Big)\|g^{(r(e))}\|_{p,Q_0 \cap Q_1}
       \ll& \Big(\prod\limits_{i\in e}t_i^{r_i}\Big)\|g_0^{(r(e))}\|_{p,Q_0}+
       \|f-g_0\|_{p,Q_0}\\
       &+\Big(\prod\limits_{i\in e}t_i^{r_i}\Big)\|g_1^{(r(e))}\|_{p,Q_1}+
       \|f-g_1\|_{p,Q_1}\,.
  \end{split}
\end{equation}
Using that 
$$
   \|g^{(r(e))}\|_{p,Q} \leq \|g^{(r(e))}\|_{p,Q_0\cap Q_1} + \|g_0^{(r(e))}\|_{p,Q_0} + \|g_1^{(r(e))}\|_{p,Q_1},
$$
we obtain together with \eqref{Ineq:[|f-g|]}, \eqref{eq2}, and \eqref{eq7} the relation
$$
    K_r(f,t^r)_{p,Q} \ll K_r(f,t^r)_{p,Q_{0}} + K_r(f,t^r)_{p,Q_1}
$$
which is \eqref{eq4}\,. We continue with the same procedure, this time with $Q_1$ and 
$Q_0$ instead of $Q$, proving that (analogously for $Q_1$)
$$
    K_r(f,t^r)_{p,Q_0} \ll K_r(f,t^r)_{p,Q_{01}} + K_r(f,t^r)_{p,Q_{00}},
$$
where 
$$
   Q_{00} = \Big(I^1_0 \times I^2_0 \times  
\prod\limits_{i\in [d]\setminus \{1,2\}} I^i\Big)\quad\mbox{and}\quad Q_{01} = \Big(I^1_0 \times I^2_1 \times  
\prod\limits_{i\in [d]\setminus \{1,2\}} I^i\Big),
$$
and so forth. An iteration of this argument finishes the proof. 
\end{proof}
\subsection{Proof of Theorem \ref{Theorem: K_r><omega_r}}
\begin{proof}
The first inequality in \eqref{ineq: K_r><omega_r} follows from the definition. 
Namely, if $f \in L_p(Q)$, 
for any non-empty $e \subset [d]$ and any $g \in W^r_p(Q)$, we have
\begin{equation*} 
\begin{aligned}
\omega_{r(e)}(f,t)_{p,Q}
\ & \le \ 
\omega_{r(e)}(f - g,t)_{p,Q} + \omega_{r(e)}(g,t)_{p,Q} \\
\ & \le \ 
\Big(\prod_{i \in e} 2^{r_i}\Big) \left\{ \|f - g\|_{p,Q} 
+ \Big(\prod_{i \in e} t_i^{r_i}\Big) \|g^{(r(e))}\|_{p,Q}\right\}.
\end{aligned}
\end{equation*}
Hence, we obtain the first inequality in \eqref{ineq: K_r><omega_r}. Let us prove the second one.
For simplicity we prove it for $d=2$ and $t \in {\RR}^2_+,\ t>0$. 
If $k$ is a natural number, then we define for univariate functions $\varphi$ on the interval 
$[a,b]$ the operator $P^k_t, t \ge 0,$ by
\begin{equation*}
P^k_t(\varphi, x):= \varphi(x) + (-1)^{k + 1} \int_{-\infty}^\infty \Delta^{k}_{th}(\varphi, x) M_k(h) dh\,,
\end{equation*}
where $M_k$ is the B-spline of order $k$ with knots at the integer points 
$0,...,k$, and support $[0,k]$. The function $P^k_t(\varphi)$ is defined 
on $[a,b-h/4]$ for $t\le {\bar t}:=h/4k^2$, where $h:= b-a$.
We have, see \cite[page 177]{DL},
\begin{equation} \label{eq:[{P^k_t(varphi)}^{(k)}]}
\{P^k_t(\varphi)\}^{(k)}(x)= t^{-k} \sum_{j=1}^k(-1)^{j + 1} j^{-k}\Delta^k_{jt}(\varphi, x).
\end{equation}
Put $h_i:= b_i-a_i$ and $c_i := a_i + h_i/4,\ d_i := b_i - h_i/4, i \in[2]$. It holds
$a_i < c_i < d_i < b_i$, and we will use the notation $Q_e$ given in \eqref{def:[Q_e]} for any $e \subset [d]$. 
In particular, we have $Q_{[2]}=[a_1,d_1] \times [a_2,d_2]$. 
For functions $f$ on the parallelepiped $Q=[a_1,b_1]\times[a_2,b_2]$ the operator
$P^r_t, t \in {\RR}^2_+$, is defined by
\begin{equation*}  
 P^r_t(f) := \prod_{i=1}^2 P^{r_i}_{t_i,i}(f),
\end{equation*}
where the univariate operator $P^{r_i}_{t_i,i}$ is
applied to the univariate function $f$ by considering $f$ as a 
function of  variable $x_i$ with the remaining variables fixed. 
The function $P^k_t(f)$ is defined on $Q_{[2]}$
for $t\le {\bar t}$, where ${\bar t}_i:=h_i/4r_i^2$.
We have
\begin{equation*}  
\begin{aligned}
P^r_t(f,x)  
\ &  = \ 
f(x) +   (-1)^{r_1 + 1} \int_{-\infty}^\infty \Delta^{r^1}_{th}(f, x) M_{r_1}(h_1) dh_1
\ + \ (-1)^{r_2 + 1} \int_{-\infty}^\infty \Delta^{r^2}_{th}(f, x) M_{r_2}(h_2) dh_2 \\
\ &  + \ 
(-1)^{r_1 + r_2 + 2} \int_{-\infty}^\infty \int_{-\infty}^\infty 
\Delta^r_{th}(f, x) M_{r_1}(h_1)M_{r_2}(h_2) dh_1dh_2, 
\end{aligned}
\end{equation*}
where $r^1 := (r_1,0)$ and $r^2:= (0,r_2)$.
Let us define the function $g_t=P^r_t(f)$. If $f \in L_p(Q)$, by  Minkowski's inequality
and properties of the B-spline $M_{r_i}$ we get
\begin{equation} \label{ineq:Norm[f-g_t(f)]} 
\|f - g_t\|_{p,Q_{[2]}}  
\ \ll \ 
\omega_{r^1}(f,t)_{p, Q_{[2]}} \ + \ \omega_{r^2}(f,t)_{p, Q_{[2]}} \ + \ \omega_r(f,t)_{p, Q_{[2]}} 
 \ = \ \Omega_r(f,t)_{p, Q_{[2]}}. 
\end{equation}
Further, by \eqref{eq:[{P^k_t(varphi)}^{(k)}]} we obtain
\begin{equation*} 
g_t^{(r^1)} 
\ = \ 
 P^{r_2}_{t_2,2}(\{P^{r_1}_{t_1,1}(f)\}^{(r^1)})
\ = \  
P^{r_2}_{t_2,2}\Big\{t_1^{-r_1} \sum_{j_1=1}^{r_1}(-1)^{j_1 + 1} j_1^{-r_1}
\binom{r_1}{j_1}\Delta^{r_1}_{j_1t_1,1}(f)\Big\}.
\end{equation*}
Since $P^{r_2}_{t_2,2}$ is a linear bounded operator from $L_p(Q_{[2]})$ into $L_p(Q_{[2]})$ and further\\ 
$\|\Delta^{r_1}_{j_1t_1,1}(f)\|_{p,Q_{[2]}} \ll \omega_{r^1}(f,t)_{p, Q_{[2]}}$, we have
\begin{equation} \label{ineq:Norm[g_t^{(r^1)}]} 
t_1^{r_1} \|g_t^{(r^1)} \|_{p,Q_{[2]}}  
\ \ll \ 
\omega_{r^1}(f,t)_{p, Q_{[2]}}. 
\end{equation}
Similarly, we can prove that
$$ 
t_2^{r_2} \|g_t^{(r^2)} \|_{p,Q_{[2]}}  
\ \ll \ 
\omega_{r^2}(f,t)_{p, Q_{[2]}}. 
$$
Again, by \eqref{eq:[{P^k_t(varphi)}^{(k)}]} we get
$$
g_t^{(r)} 
\ = \ 
 t_1^{-r_1} t_2^{-r_2}\sum_{j_1=1}^{r_1}\sum_{j_2=1}^{r_2}
(-1)^{j_1 + j_2 + 2}j_1^{-r_1}j_2^{-r_2}\binom{r_1}{j_1}\binom{r_2}{j_2}\Delta^r_{jt}(f).
$$
From the inequality 
$\|\Delta^r_{jt}(f)\|_{p,Q} \ll \omega_r(f,t)_{p, Q_{[2]}}$ it follows that
\begin{equation} \label{ineq:Norm[g_t^{(r)}]} 
t_1^{r_1} t_2^{r_2}\|g_t^{(r)} \|_{p,Q_{[2]}}  
\ \ll \ 
\omega_r(f,t)_{p, Q_{[2]}}. 
\end{equation} 
Combining \eqref{ineq:Norm[f-g_t(f)]},\eqref{ineq:Norm[g_t^{(r^1)}]}--\eqref{ineq:Norm[g_t^{(r)}]} gives
\begin{equation*} 
\|f - g_t\|_{p,Q_{[2]}} \ + \ t_1^{r_1} \|g_t^{(r^1)} \|_{p,Q_{[2]}}  
\ + \ t_2^{r_2} \|g_t^{(r^2)} \|_{p,Q_{[2]}} \ + \ t_1^{r_1} t_2^{r_2}\|g_t^{(r)} \|_{p,Q_{[2]}} 
\ \ll \ 
 \Omega_r(f,t)_{p, Q}. 
\end{equation*}
Therefore, we get 
\begin{equation*} 
  K_r(f,t^r)_{p,Q_{[2]}} \ \ll \ \Omega_r(f,t)_{p,Q}\,,
\end{equation*}
and in a similar way
\begin{equation*} 
  K_r(f,t^r)_{p,Q_e} \ \ll \ \Omega_r(f,t)_{p,Q} 
\end{equation*}
for any subset $e \subset [2]$, where $Q_e$ is given by (\ref{def:[Q_e]}).
The last inequality and Lemma \ref{Lemma: Ineq[K_r]} prove \eqref{ineq: K_r><omega_r} for $t \le {\bar t}$.
Now take a function ${\bar g}\in W^r_p(Q)$ such that
\begin{equation}\label{k<omega} 
\|f - {\bar g}\|_{p,Q} \ + \ {\bar t}_1^{r_1} \|{\bar g}^{(r^1)} \|_{p,Q}  
 \ + \ {\bar t}_2^{r_2} \|{\bar g}^{(r^2)} \|_{p,Q} 
\ + \ {\bar t}_1^{r_1} {\bar t}_2^{r_2}\|{\bar g}^{(r)} \|_{p,Q} 
\ \ll \ 
 \Omega_r(f,{\bar t})_{p,Q}. 
\end{equation}
By Theorem \ref{Theorem:UpperBnd[E_{r-1}(f)]} we have
\begin{equation}\label{taylorappr} 
\|{\bar g} - T_r({\bar g})\|_{p,Q}
 \ \ll \ {\bar t}_1^{r_1} \|{\bar g}^{(r^1)} \|_{p,Q}  
 \ + \ {\bar t}_2^{r_2} \|{\bar g}^{(r^2)} \|_{p,Q} 
\ + \ {\bar t}_1^{r_1} {\bar t}_2^{r_2}\|{\bar g}^{(r)} \|_{p,Q}\\ \,. 
\end{equation}
Since $T_r ({\bar g}) \in W^r_p(Q)$ and $(T_r ({\bar g}))^{(r(e))} = 0$ for every non-empty subset $e\subset [d]$, 
it holds for all  $t > {\bar t}$
\begin{equation*} 
\begin{split}
  K_r(f,t^r) &\leq \|f-T_r({\bar g})\|_{p,Q}\\
  &\leq \|f-\bar{g}\|_{p,Q} + \|\bar{g}-T_r({\bar g})\|_{p,Q}\\
  &\ll 
  \Omega_r(f,{\bar t})_{p,Q}\leq \Omega_r(f,t)_{p,Q}\,,
\end{split}
\end{equation*}
where the third step combines \eqref{k<omega} and \eqref{taylorappr}.
Therefore, \eqref{ineq: K_r><omega_r} has been proved for arbitrary $t > 0.$
\end{proof}

\section{ Whitney type inequalities}
\label{Whitney's inequality}

Using the results from Section \ref{Johnen's inequality} we are now able to prove
Theorem \ref{Theorem: Whitney}.

\begin{proof}
The first inequality in \eqref{ineq:WhitneyIneq} is trivial.
Indeed, if $f \in L_p(Q)$ then for any non-empty $e \subset [d]$ and any $\varphi \in \Pp_r$ we have
\begin{equation*} 
\begin{aligned}
\omega_{r(e)}(f,\delta)_{p,Q}
\ & = \ 
\omega_{r(e)}(f - \varphi, \delta)_{p,Q} \\
\ & \le \ 
\Big(\prod_{i \in e} 2^{r_i}\Big)  \|f - \varphi\|_{p,Q}.
\end{aligned}
\end{equation*}
Hence, we obtain the first inequality in \eqref{ineq:WhitneyIneq}.
On the other hand, from Theorem \ref{Theorem:UpperBnd[E_{r-1}(f)]} it follows that 
for any $g \in W^r_p(Q)$
\begin{equation*} 
\begin{aligned}
E_r(f)_{p,Q}  
\ & \le \ 
\|f - g\|_{p,Q} \ + \ E_r(g)_{p,Q} \\
 \ & \le \ 
\|f - g\|_{p,Q} \ + \
\|g - T_r(g)\|_{p,Q} \\ 
\ & \ll \ 
\|f - g\|_{p,Q} 
\ + \ 
\Big(\prod_{i \in e} \delta_i^{r_i}\Big) \|g^{(r(e))}\|_{p,Q}.
\end{aligned}
\end{equation*}
Hence, we get 
$$
 E_r(f)_{p,Q}
 \  \ll \ 
K_r(f,\delta^r)_{p,Q}.
$$
By Theorem \ref{Theorem: K_r><omega_r} we have proved the second inequality in \eqref{ineq:WhitneyIneq}.   
\end{proof}

The result in Theorem \ref{Theorem: Whitney} can be slightly modified. 
For $r \in {\ZZ}^d_+$, $h\in \RR^d$, $e \subset [d]$ and a 
$d$-variate function $f:\RR^d \to \RR$ 
the mixed $p$-mean modulus of smoothness of order $r(e)$ is given by
\begin{equation*}
w_{r(e)}(f,t)_p 
\ := \ 
\left( \Big(\prod_{i \in e} t_i^{- 1}\Big) \int_{U(t)} \int_{Q_{r(e)h}}
|\Delta_h^{r(e)}(f,x)|^p \ dx \ dh \right)^{1/p}, \ t \in \RR^d_+,
\end{equation*}
where $U(t):= \{ h \in {\RR}^d: |h_i| \le t_i, \ i \in [d]\}$, 
with the usual change of the outer mean integral to sup if $p= \infty$. This leads to 
the definition of the {\em total mixed $p$-mean modulus of smoothness of order $r$} by
\begin{equation*} 
W_r(f,t)_{p,Q}
:= \ 
 \sum_{e \subset [d], e \ne \emptyset} w_{r(e)}(f,t)_{p,Q}, \quad t \in {\RR}^d_+.
\end{equation*}
Note that $W_r(f,t)_{p,Q}$ coincides with $\Omega_r(f,t)_{p,Q}$ when $p= \infty$.
In a way similar to the proof of Theorem \ref{Theorem: Whitney} we can prove the following slightly stronger result.

\begin{theorem} 
Let $1 \le p \le \infty$, $r \in {\NN}^d$. Then there are constants $C, C'$ depending 
only on $r,d$ such that for every $f \in L_p(Q)$
\begin{equation*}  
C W_r(f,\delta)_{p,Q} 
\ \le \ 
E_r(f)_{p,Q}  
\ \le \ 
C' W_r(f,\delta)_{p,Q} 
\end{equation*}
where $\delta = \delta(Q)$. 
\end{theorem}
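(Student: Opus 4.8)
The plan is to run the same two-sided scheme that proves Theorem~\ref{Theorem: Whitney}, but to feed the averaged modulus $W_r$ through it in place of $\Omega_r$. The point to keep in mind is that $W_r(f,t)_{p,Q}\le C\,\Omega_r(f,t)_{p,Q}$, because for each $e$ the quantity $w_{r(e)}$ is the $p$-th root of a mean of the powers $\|\Delta_h^{r(e)}(f)\|_{p}^p$ over $h\in U(t)$ and is hence dominated by the corresponding supremum; consequently the right-hand estimate $E_r(f)_{p,Q}\ll W_r(f,\delta)_{p,Q}$ is \emph{stronger} than the one in Theorem~\ref{Theorem: Whitney} and cannot be inherited from it --- the averaged modulus has to be produced on the nose. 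The left-hand (lower) inequality, on the other hand, is the easy half: for $\varphi\in\Pp_r$ and non-empty $e\subset[d]$ one has $\Delta_h^{r(e)}(\varphi)=0$, so $w_{r(e)}(f,\delta)_p=w_{r(e)}(f-\varphi,\delta)_p$, and the triangle inequality in $L_p$ together with translation invariance gives $\|\Delta_h^{r(e)}(f-\varphi)\|_p\le\big(\prod_{i\in e}2^{r_i}\big)\|f-\varphi\|_{p,Q}$ uniformly in $h$; this uniform bound passes through the averaging defining $w_{r(e)}$, and taking the infimum over $\varphi$ and summing over $e$ yields $C\,W_r(f,\delta)_{p,Q}\le E_r(f)_{p,Q}$.

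For the upper bound I would first reduce, exactly as in the proof of Theorem~\ref{Theorem: Whitney} via Theorem~\ref{Theorem:UpperBnd[E_{r-1}(f)]}, to the inequality $E_r(f)_{p,Q}\ll K_r(f,\delta^r)_{p,Q}$, so that it suffices to establish the \emph{averaged Johnen inequality} $K_r(f,t^r)_{p,Q}\ll W_r(f,t)_{p,Q}$. I would reuse the B-spline averaging operator $P^r_t$ from the proof of Theorem~\ref{Theorem: K_r><omega_r}, but now squeeze out the averaged modulus rather than the supremum one. For the approximation error this costs nothing: writing $P^r_t(f)-f$ as a sum over non-empty $e$ of B-spline weighted integrals $\int\Delta_{th}^{r(e)}(f)\prod_{i\in e}M_{r_i}(h_i)\,dh$ and applying Jensen's inequality --- each $M_{r_i}\,dh_i$ being a bounded, compactly supported probability density --- turns the inner integral of $|\Delta_{th}^{r(e)}(f)|^p$ directly into $w_{r(e)}(f,\cdot)_p^p$, up to a constant depending on $r,d$.

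The genuine obstacle is the derivative part $\big(\prod_{i\in e}t_i^{r_i}\big)\|\{P^r_t(f)\}^{(r(e))}\|_p$. The closed-form identity \eqref{eq:[{P^k_t(varphi)}^{(k)}]} expresses it as a \emph{finite} sum of differences $\Delta_{jt}^{r(e)}(f)$ at the discrete steps $jt$, and a finite sum of single-step differences only returns the sup modulus $\omega_{r(e)}$, never the averaged one. My remedy would be to average the operator once more over the scale: replace $P^r_t$ by $\bar P^r_t(f):=\int_{[1,2]^d}P^r_{st}(f)\,d\mu(s)$ for a fixed smooth product weight $\mu$ on $[1,2]^d$. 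Averaging preserves $L_p$-boundedness and the reproduction of $\Pp_r$, so the error estimate of the previous paragraph is unchanged; but in the derivative formula the steps $j s_i t_i$ now sweep out whole intervals as $s$ ranges over $[1,2]^d$, and a change of variables converts each finite sum into an honest integral of $\|\Delta_u^{r(e)}(f)\|_p$ over $u$. A final application of Jensen then delivers $\big(\prod_{i\in e}t_i^{r_i}\big)\|\{\bar P^r_t(f)\}^{(r(e))}\|_p\ll w_{r(e)}(f,t)_p$. This extra scale-averaging is the only new ingredient; I expect it to be the main technical point, the rest being bookkeeping.

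Summing the error and derivative estimates over the non-empty $e$ gives $K_r(f,t^r)_{p,Q_e}\ll W_r(f,t)_{p,Q}$ for every $e\subset[d]$ and all $t\le\bar t$ (with $\bar t_i=h_i/4r_i^2$), exactly as in the proof of Theorem~\ref{Theorem: K_r><omega_r}; gluing these local pieces by Lemma~\ref{Lemma: Ineq[K_r]} yields $K_r(f,\bar t^r)_{p,Q}\ll W_r(f,\bar t)_{p,Q}$. To reach the true scale $\delta=\delta(Q)>\bar t$ I would copy the large-scale device of that proof: choose $\bar g\in W^r_p(Q)$ nearly attaining this $K$-functional and replace it by its Taylor polynomial $T_r(\bar g)\in\Pp_r$, whose mixed derivatives of order $r(e)$ vanish. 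Then, by Theorem~\ref{Theorem:UpperBnd[E_{r-1}(f)]}, $E_r(f)_{p,Q}\le\|f-T_r(\bar g)\|_{p,Q}\le\|f-\bar g\|_{p,Q}+\|\bar g-T_r(\bar g)\|_{p,Q}\ll W_r(f,\bar t)_{p,Q}$. Finally, since $\bar t$ is a fixed fraction of $\delta$, the averaged modulus at scale $\bar t$ is at most a constant (depending on $r,d,p$) times the one at scale $\delta$, i.e. $W_r(f,\bar t)_{p,Q}\ll W_r(f,\delta)_{p,Q}$ --- passing from $\delta$ to $\bar t$ only shrinks the step-domain of integration while changing the normalising factor by a bounded amount. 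This gives the right-hand inequality and completes the proof.
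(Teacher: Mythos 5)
Your proposal is correct, and it follows the route the paper intends: for this theorem the paper records no proof at all, only the remark that it is obtained ``in a way similar to the proof of Theorem~\ref{Theorem: Whitney}''. Your lower bound is the expected easy half (differences of order $r(e)$, $e\neq\emptyset$, annihilate $\Pp_r$, and the uniform bound $\|\Delta_h^{r(e)}(f-\varphi)\|_{p}\le\prod_{i\in e}2^{r_i}\|f-\varphi\|_{p,Q}$ survives the averaging; alternatively this half is inherited outright from Theorem~\ref{Theorem: Whitney} since $W_r\ll\Omega_r$), and your upper bound reduces, as in the paper, to an averaged Johnen inequality $K_r(f,t^r)_{p,Q}\ll W_r(f,t)_{p,Q}$ proved with the smoothing operator $P^r_t$, the patching Lemma~\ref{Lemma: Ineq[K_r]}, and Theorem~\ref{Theorem:UpperBnd[E_{r-1}(f)]}. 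The genuinely new ingredient you supply --- which the paper's one-line remark glosses over and which really is needed --- is the treatment of the derivative terms: identity \eqref{eq:[{P^k_t(varphi)}^{(k)}]} is a finite sum of single-step differences and by itself only returns $\omega_{r(e)}$, so some extra averaging is unavoidable, and your device of integrating $P^r_{st}$ over the scale $s\in[1,2]^d$ (which preserves reproduction of $\Pp_r$, $L_p$-boundedness, and the error estimate, while turning the discrete steps $js_it_i$ into genuine integration variables) is the standard and correct fix. Two minor points to tighten in a full write-up: choose $\bar t_i$ a sufficiently small constant multiple of $\delta_i$ so that every step length produced by the supports of the $M_{r_i}$ and by the substitutions $u_i=j_is_it_i$ stays below $\delta_i$; then all modulus comparisons you need are of the form $w_{r(e)}(f,ct)_p\ll w_{r(e)}(f,t')_p$ with $ct\le t'$, which follow from monotonicity of the un-normalized integral together with a bounded change of the normalizing factor, exactly as in your closing remark. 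Also, since $\bar t$ is a fixed fraction of $\delta$, the final Taylor-polynomial step can be shortcut by the direct comparison $K_r(f,\delta^r)_{p,Q}\ll K_r(f,\bar t^{\,r})_{p,Q}$, though your version is equally valid.
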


\begin{remark} A corresponding inequality in the case $0<p<1$ is so far left open for subsequent contributions. 
It seems that the modulus $W_r(f,t)_{p,Q}$ is suitable to treat this case, cf. the appendix of \cite{HeNe07}.  
\end{remark}

{\bf Acknowledgements}
\ Since main parts of the paper have been worked out during a stay of the second named author 
at the Information Technology Institute of the Vietnam National University, Hanoi in 2010, 
Tino Ullrich would like to thank here Prof. Dinh Dung for his kind invitation and warm hospitality. 
He would also like to thank the Hausdorff-Center for Mathematics at University of Bonn (Germany) and 
Prof. Holger Rauhut for giving additional financial support. The work of the first named author was supported by the 
National Foundation for Development of Science and Technology (Vietnam). Both authors would also like to thank Dany
Leviatan for a critical reading of the manuscript and for several valuable comments how to improve it.

\end{document}